\newtheorem{theorem}{Theorem}[section]
\newtheorem{proposition}[theorem]{Proposition}
\newtheorem{example}[theorem]{Example}
\newtheorem{corollary}[theorem]{Corollary}
\newtheorem{lemma}[theorem]{Lemma}
\renewcommand{\Re}{\mathop{\rm Re}}
\begin{document}
\title[Necessary condition for compactness$\ldots$]{Necessary condition for compactness of a difference of composition operators on the Dirichlet space}
\author{Ma{\l}gorzata Michalska, Andrzej M. Michalski}

\address{
Ma{\l}gorzata Michalska,  \newline Institute of Mathematics,
\newline Maria Curie-Sk{\l}odowska University, \newline pl. M.
Curie-Sk{\l}odowskiej 1, \newline 20-031 Lublin, Poland}
\email{malgorzata.michalska@poczta.umcs.lublin.pl}

\address{
Andrzej M. Michalski, \newline
Department of Complex Analysis, \newline
The John Paul II Catholic University of Lublin, \newline
ul. Konstantyn\'{o}w 1H, \newline
20-950 Lublin, Poland}
\email{amichal@kul.lublin.pl}

\date{\today}

\subjclass[2010]{47B33} \keywords{composition operator; Dirichlet space; compact operator; commutator}

\maketitle

\begin{abstract}
Let $\varphi$ be a self-map of the unit disk and let
$C_\varphi$ denote the composition operator acting on the standard
Dirichlet space $\mathcal{D}$. A necessary condition for compactness
of a difference of two bounded composition operators acting on
$\mathcal{D}$, is given. As an application, a characterization of
disk automorphisms $\varphi$ and $\psi$ for which the commutator
$[C^*_\psi,C_\varphi]$ is compact, is given.
\end{abstract}

\baselineskip1.4\baselineskip

\section{Introduction}
Let $\mathbb{D}=\{z:\,|z|<1\}$ denote the open unit disk in the
complex plane $\mathbb{C}$ and let $\mathbb{T}=\{z:\,|z|=1\}$ denote
the unit circle in $\mathbb{C}$. The Dirichlet space $\mathcal{D}$
is the space of all analytic functions $f$ in $\mathbb{D}$, such
that
  \begin{equation*}%\label{norm_dirich}
    \|f\|^2_\mathcal{D}:=|f(0)|^2+ \int_{\mathbb{D}}|f^\prime(z)|^2dA(z)<\infty,
  \end{equation*}
where $dA(z)=\pi^{-1}dxdy$ is the normalized two dimensional
Lebesgue measure on $\mathbb{D}$. The Dirichlet space is a Hilbert
space with inner product
\begin{equation*}%\label{inner_product}
  \langle f,g\rangle_\mathcal{D}:=f(0)\overline{g(0)}+\int_{\mathbb{D}} f^\prime(z)\overline{g^\prime(z)}dA(z).
\end{equation*}
The Dirichlet space has the reproducing
kernel property and the kernel function
is defined as
\begin{equation}\label{kernel}
  K_w(z):=1+\log\frac{1}{1-\overline{w}z},
\end{equation}
where the branch of the logarithm is chosen such that
\begin{equation*}%\label{kernel_log}
  \overline{K_w(z)}=\overline{\langle K_w,K_z\rangle_\mathcal{D}}=\langle K_z,K_w\rangle_\mathcal{D}=K_z(w).
\end{equation*}

By a self-map of $\mathbb{D}$ we mean an analytic function $\varphi$
such that $\varphi(\mathbb{D})\subset\mathbb{D}$. We will also
assume that a self-map $\varphi$ is not a constant function. For a self-map of the unit disk $\varphi$, the composition operator
$C_{\varphi}$ on the Dirichlet space $\mathcal{D}$ is defined by
$C_{\varphi}f:=f\circ \varphi$. The composition operator $C_{\varphi}$ on Dirichlet space
is not necessarily bounded for an arbitrary self-map of the
unit disk. However, $C_{\varphi}$ is bounded on
$\mathcal{D}$ if, for example, $\varphi$ is a finitely valent
function (see, e.g., \cite{MV2,Z2}). More is known about the composition
operator $C_{\varphi}$ when the symbol $\varphi$ is a
linear-fractional self-map of the unit disk of the form
  $$\varphi(z):=\frac{az+b}{cz+d},$$
where $ad-bc\not= 0$.
In that case $C_\varphi$ is compact on $\mathcal{D}$
if and only if $\|\varphi\|_\infty<1$ (see, e.g., \cite{CM,S,Z2}).

For an arbitrary self-map of the
unit disk $\varphi$, if the operator $C_{\varphi}$ is bounded, then the adjoint operator
$C_{\varphi}^{*}$ satisfies
\begin{equation*}%\label{adjoint_definition}
  C_{\varphi}^{*}f(w)=\langle f,K_{w}\circ \varphi\rangle_\mathcal{D},
\end{equation*}
which yields useful equality
\begin{equation}\label{adjoint_kernel_property}
  C^*_\varphi K_w=K_{\varphi(w)}.
\end{equation}
For $\varphi$ a
linear-fractional self-map of $\mathbb{D}$, Gallardo-Guti\'{e}rrez and Montes-Rodr\'{i}guez in
\cite{GM} (see also \cite{MV}) proved that the adjoint of the
composition operator is given by formula
\begin{equation}\label{Martin_Vukotic_adjoint}
  C_{\varphi}^{*}f=f(0)K_{\varphi(0)}-s(C_{\varphi^{*}}f)(0)+sC_{\varphi^{*}}f,
\end{equation}
where $s:=ad-bc$ and
  $$\varphi^{*}(z):=\frac{1}{\overline{\varphi^{-1}(\frac{1}{\overline{z}})}}, \quad z\in\mathbb{D}$$
is the Krein adjoint of $\varphi$. It is worth to note that $\varphi^{*}$ is a linear-fractional self-map of the unit disk, in fact
  $$\varphi^{*}(z)=\frac{\overline{a}z-\overline{c}}{-\overline{b}z+\overline{d}}.$$
It is easy to check that $w$ is a fixed point of $\varphi$ if and
only if $1/\overline{w}$ is a fixed point of $\varphi^*$. In
particular, if $\varphi$ has a fixed point on $\mathbb{T}$ then it
is a fixed point of both $\varphi$ and $\varphi^*$.

Let $\varphi$ be a disk automorphism, which is of the form
\begin{equation}\label{automorph}
  \varphi(z)=e^{i\theta}\frac{a-z}{1-\overline{a}z},\quad z\in
  \mathbb{D},
\end{equation}
where $a\in \mathbb{D}$ and $\theta\in(-\pi,\pi]$. We will say that
\begin{itemize}
  \item $\varphi$ is elliptic if and only if $|a|<\cos\frac{\theta}{2}$,
  \item $\varphi$ is parabolic if and only if $|a|=\cos\frac{\theta}{2}$,
  \item $\varphi$ is hyperbolic if and only if $|a|>\cos\frac{\theta}{2}$,
\end{itemize}
(see,
e.g., \cite[Ex. 4, p. 7]{S}). One can easily verify that if $\varphi$ is elliptic then $\varphi^*$ is also elliptic.

For $\varphi$ and $\psi$, two linear-fractional self-maps of
$\mathbb{D}$, we consider the commutator
  $$[C_{\psi}^{*},C_{\varphi}]:=C_{\psi}^{*}C_{\varphi}-C_{\varphi}C_{\psi}^{*}$$
on $\mathcal{D}$. The compactness of the commutator can be expressed
by setting conditions on the maps $\psi$ and $\varphi$. The
commutator $[C_{\psi}^{*},C_{\varphi}]$ is trivially compact on
$\mathcal{D}$ if it is equal to zero, or when
$C_{\psi}^{*}C_{\varphi}$ and $C_{\varphi}C_{\psi}^{*}$ are both
compact. In particular, this happens when $\|\psi\|_{\infty}<1$ or
$\|\varphi\|_{\infty}<1$. Thus, to avoid triviality, we will
consider only composition operators, and their adjoints, whose
symbols are the linear-fractional self-maps of $\mathbb{D}$ with
$\|\psi\|_{\infty}=\|\varphi\|_{\infty}=1$.

We should mention, that if $\varphi$ and $\psi$ are two
linear-fractional self-maps of $\mathbb{D}$ then there are known
conditions for non-trivial compactness of the commutator
$[C_{\psi}^{*},C_{\varphi}]$ acting on the Hardy space
$H^2$ obtained by Clifford et al. \cite{CLN}, and
acting on the weighted Bergman spaces $A_{\alpha}^2(\mathbb{D})$
obtained by MacCluer et al. \cite{MNW}. Their results were obtained for $\|\psi\|_{\infty}=\|\varphi\|_{\infty}=1$ in the case
when both $\varphi$ and $\psi$ are disk
automorphisms, and in the case when at least one of the maps is not an
automorphism. In particular, they proved that in the first case the commutator is non-trivially
compact if and only if both maps are rotations. We refer the reader to \cite{MNW} for more background information.

In this paper we study properties of the difference of two composition operators defined on the
Dirichlet space. In Section 2 we give a necessary condition for
compactness of the difference of two bounded
composition operators. In Section 3, as an application of our necessary condition for compactness, we determine when the commutator $[C_{\psi}^{*},C_{\varphi}]$, with both symbols
$\varphi$ and $\psi$ being disk automorphisms and not equal to the
identity, is compact.

\section{Difference of two composition operators}
\setcounter{equation}{0}

To study compactness of the commutator $[C_{\psi}^{*},C_{\varphi}]$
we need to know when a difference of two composition operators is
compact. There are known conditions for compactness of a difference
of composition operators for weighted Dirichlet spaces obtained
by Moorhouse in \cite{M}. Unfortunately, these results do not apply
to the classical Dirichlet space $\mathcal{D}$. In Theorem
\ref{compact_diff} we give a necessary condition for compactness of
the difference of two bounded composition operators on $\mathcal{D}$. First,
we prove a technical lemma.

\begin{lemma}\label{log_lim}
Let the sequences $\mathbb{N}\ni n\mapsto a_n \in(0,1)$ and $\mathbb{N}\ni
n\mapsto b_n \in(0,1)$ converge to $0$ and let
$\lim_{n\to\infty}b_n/a_n=0$. Then there exists a positive integer $N$
such that
  $$0<\frac{\ln a_n}{\ln b_n}<1,$$
for all $n>N$.
\end{lemma}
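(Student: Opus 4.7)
The plan is to reduce both inequalities to an elementary comparison of $a_n$ and $b_n$ for large $n$, using that $\ln x<0$ on $(0,1)$. First I would observe that since $a_n,b_n\in(0,1)$ we have $\ln a_n<0$ and $\ln b_n<0$, so the quotient $\ln a_n/\ln b_n$ is automatically strictly positive (and well defined, as $\ln b_n\neq 0$). Hence the lower bound in the claim is free, and the entire content of the lemma lies in the upper bound.

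For the upper bound, I would rewrite $\ln a_n/\ln b_n<1$ as $\ln a_n>\ln b_n$ — multiplying by the negative quantity $\ln b_n$ reverses the inequality — which by strict monotonicity of $\ln$ is equivalent to $a_n>b_n$. Since $b_n/a_n\to 0$, there exists $N$ such that $b_n/a_n<1/2$ for all $n>N$, and in particular $b_n<a_n$. Running the chain of equivalences backward gives $0<\ln a_n/\ln b_n<1$ for $n>N$, as required.

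Since the argument is essentially a one-step observation, I do not expect any real obstacle; the only subtlety is remembering to flip the inequality when dividing by the negative number $\ln b_n$. Note that the full strength of the hypothesis $b_n/a_n\to 0$ is not needed here — $\limsup b_n/a_n<1$ would suffice — but the stronger assumption is presumably what will be convenient when the lemma is invoked in the proof of Theorem~\ref{compact_diff}.
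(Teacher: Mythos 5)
Your proof is correct and follows essentially the same route as the paper's: both arguments note that $\ln a_n$ and $\ln b_n$ are negative (giving positivity of the quotient for free) and derive the upper bound from $b_n<a_n$ for large $n$, which follows from $b_n/a_n\to 0$. Your closing remark that $\limsup_{n\to\infty} b_n/a_n<1$ would suffice is accurate but does not change the substance.
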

\begin{proof}
Let the sequences $\mathbb{N}\ni n\mapsto a_n \in(0,1)$ and $\mathbb{N}\ni
n\mapsto b_n \in(0,1)$ be as required in our lemma. By the
assumption $\lim_{n\to\infty}b_n/a_n=0$ we know that there
exists positive integer $N$ such that $b_n/a_n<1$ for all $n>N$.
Thus, for $n>N$ we have $\ln\frac{b_n}{a_n}<0$ and since both $\ln a_n$ and $\ln b_n$ are negative
  $$0<\frac{\ln a_n}{\ln b_n}<1.$$
This completes
the proof.
\end{proof}

Now, we are ready to state our main result.
\begin{theorem}
\label{compact_diff}
Let $\varphi$ and $\psi$ be self-maps of the unit disk $\mathbb{D}$ such that the composition operators $C_\varphi,C_\psi$ induced by $\varphi$ and $\psi$, respectively, are bounded.
If $C_\varphi -C_\psi$ is compact on $\mathcal{D}$ then
\begin{equation}\label{compact_difference}
  \lim_{|w|\to 1^-}\left\{\frac{1-|w|^2}{1-|\varphi(w)|^2}+\frac{1-|w|^2}{1-|\psi(w)|^2}\right\}|\varphi(w)-\psi(w)|=0.
\end{equation}
\end{theorem}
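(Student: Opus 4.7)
The plan is to probe the compactness of $(C_\varphi-C_\psi)^*$ with the normalized reproducing kernels $k_w:=K_w/\|K_w\|_{\mathcal{D}}$. First I would verify that $k_w\to 0$ weakly in $\mathcal{D}$ as $|w|\to 1^-$: since $\|K_w\|_{\mathcal{D}}^2=K_w(w)=1+\log\frac{1}{1-|w|^2}\to\infty$ while for every polynomial $p$ one has $\langle p,k_w\rangle_{\mathcal{D}}=p(w)/\|K_w\|_{\mathcal{D}}\to 0$, the density of polynomials in $\mathcal{D}$ promotes this to weak convergence. Since the adjoint of a compact operator is compact, $\|(C_\varphi-C_\psi)^*k_w\|_{\mathcal{D}}\to 0$, which by \eqref{adjoint_kernel_property} reads
\[\frac{\|K_{\varphi(w)}-K_{\psi(w)}\|_{\mathcal{D}}^2}{\|K_w\|_{\mathcal{D}}^2}\longrightarrow 0\quad\text{as }|w|\to 1^-.\]

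Next I would evaluate the numerator using (\ref{kernel}). Setting $A:=1-|\varphi(w)|^2$, $B:=1-|\psi(w)|^2$ and $D:=|1-\overline{\varphi(w)}\psi(w)|$, the reproducing identity yields
\[\|K_{\varphi(w)}-K_{\psi(w)}\|_{\mathcal{D}}^2=\log\frac{D^2}{AB},\]
and a direct expansion gives the algebraic identity $D^2=AB+|\varphi(w)-\psi(w)|^2$. Together with the previous display, Lemma \ref{log_lim} applied along any sequence $w_n\to\mathbb{T}$ (with $a_n=1-|w_n|^2$ and $b_n=A_nB_n/D_n^2$) upgrades the logarithmic smallness into a polynomial bound: for every $\epsilon\in(0,1)$ there is a constant $C_\epsilon>0$ such that
\[\frac{D^2}{AB}\leq C_\epsilon\,(1-|w|^2)^{-\epsilon}\]
whenever $|w|$ is sufficiently close to $1$.

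To finish, I would invoke two elementary geometric estimates. The identity $D^2=AB+|\varphi(w)-\psi(w)|^2$ immediately gives $|\varphi(w)-\psi(w)|\leq D$. Moreover, $D\geq 1-|\varphi(w)||\psi(w)|\geq\max\{1-|\varphi(w)|,\,1-|\psi(w)|\}\geq\max(A,B)/2$, hence $A+B\leq 2\max(A,B)\leq 4D$. Combining,
\[\left(\frac{1}{A}+\frac{1}{B}\right)|\varphi(w)-\psi(w)|=\frac{(A+B)\,|\varphi(w)-\psi(w)|}{AB}\leq\frac{4D^2}{AB}.\]
Multiplying through by $1-|w|^2$ and invoking the polynomial bound with $\epsilon=1/2$ controls the expression in \eqref{compact_difference} by $4C_{1/2}(1-|w|^2)^{1/2}$, which tends to $0$.

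The step I expect to be the main obstacle is the second one, where the logarithmic decay $\log(D^2/AB)=o(\log(1/(1-|w|^2)))$ supplied by compactness must be converted into a polynomial decay $D^2/AB=o((1-|w|^2)^{-\epsilon})$; this is precisely what Lemma \ref{log_lim} is tailored for. The rest of the argument is algebraic rearrangement based on the pseudo-hyperbolic identity $D^2-AB=|\varphi(w)-\psi(w)|^2$ and the lower bound $D\geq \max(A,B)/2$.
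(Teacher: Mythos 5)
Your argument is correct, and it reaches the conclusion by a genuinely different and more direct route than the paper. The paper argues by contraposition: it assumes \eqref{compact_difference} fails, extracts a subsequence $w_n$ along which $\varphi(w_n)$, $\psi(w_n)$ and the two quotients all converge (using the Schwarz--Pick bound \eqref{analytic_upperbound}), and then runs a case analysis ($|\varphi_0|$ and $|\psi_0|$ on or off the circle, $\Phi_0$ and $\Psi_0$ zero or not) to show the normalized kernel ratio \eqref{adjoint_diff} stays bounded away from $0$. You instead work forward from compactness: the same test functions and the same norm computation give $\log(D^2/AB)=o(1+\log\frac{1}{1-|w|^2})$, and the identity $D^2=AB+|\varphi(w)-\psi(w)|^2$ together with the elementary bounds $|\varphi(w)-\psi(w)|\le D$ and $A+B\le 4D$ reduces the target quantity to $4(1-|w|^2)D^2/(AB)$, which the exponentiated little-$o$ estimate kills. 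This eliminates the subsequence extraction and the entire case analysis, and even yields a quantitative rate $O((1-|w|^2)^{1/2})$ once $|w|$ is close enough to $1$; what the paper's longer route buys in exchange is the explicit limit values ($1$ or $2$) of the kernel ratio in each degenerate configuration, which is more informative for essential-norm considerations. One small correction: your appeal to Lemma \ref{log_lim} for the passage from $\log(D^2/AB)=o(\log\frac{1}{1-|w|^2})$ to $D^2/AB\le C_\epsilon(1-|w|^2)^{-\epsilon}$ is a misattribution --- with $b_n=A_nB_n/D_n^2$ the lemma's hypotheses ($b_n\to 0$ and $b_n/a_n\to 0$) need not hold --- but the bound you want follows immediately by exponentiating the inequality $0\le\log(D^2/AB)<\epsilon(1+\log\frac{1}{1-|w|^2})$, valid for $|w|$ near $1$, with $C_\epsilon=e^\epsilon$; so this is a citation slip, not a gap.
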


\begin{proof}
Clearly, $C_\varphi -C_\psi$ is compact if and only if
$C^*_\varphi-C^*_\psi$ is compact. Therefore, it is enough to prove
that if \eqref{compact_difference} does not hold, then the operator
$C^*_\varphi-C^*_\psi$ is not compact on $\mathcal{D}$. Assume that
the limit in \eqref{compact_difference} does not exist or it exists,
but it is not equal to $0$. In both cases one can find a sequence
$\mathbb{N}\ni n\mapsto w_n \in\mathbb{D}\setminus\{0\}$, with
$|w_n|\to 1^-$, such that the following conditions are satisfied:
\begin{enumerate}
  \item[(i)] $\displaystyle\lim_{n\to \infty}\left\{\frac{1-|w_n|^2}{1-|\varphi(w_n)|^2}+\frac{1-|w_n|^2}{1-|\psi(w_n)|^2}\right\}|\varphi(w_n)-\psi(w_n)|\not=0,$
  \item[(ii)] the limits $\displaystyle\psi_0:=\lim_{n\to\infty}\psi(w_n)$ and $\displaystyle\varphi_0:=\lim_{n\to\infty}\varphi(w_n)$ exist,
  \item[(iii)] the limits $\displaystyle\Phi_0:=\lim_{n\to \infty}\frac{1-|w_n|^2}{1-|\varphi(w_n)|^2}$ and $\displaystyle\Psi_0:=\lim_{n\to \infty}\frac{1-|w_n|^2}{1-|\psi(w_n)|^2}$ exist.
%$\psi(w_n)\not=0$ and $\varphi(w_n)\not=0$ for all $n\in\mathbb{N}$,\item[(iv)]
\end{enumerate}
Indeed, such a sequence exists. Observe, that if $\varphi$ is a
self-map of the unit disk, then as a consequence of Schwarz-Pick
lemma we have (see, e.g., \cite[Corollary 2.40]{CM})
\begin{equation}\label{analytic_upperbound}
  \frac{1-|\varphi(w_n)|}{1-|w_n|}\geq \frac{1-|\varphi(0)|}{1+|\varphi(0)|}, \quad
  w\in\mathbb{D}
\end{equation}
and thus both factors in the limit in condition (i) are bounded.
Consequently, by passing to a subsequence if necessary, we obtain sequence
satisfying (i)-(iii).

Now, we consider a sequence of normalized kernel functions
$K_{w_n}/\|K_{w_n}\|$, where $K_{w_n}$ is given by \eqref{kernel},
and we show that $\|(C^*_\varphi-C^*_\psi)K_{w_n}\|/\|K_{w_n}\|$
does not tend to $0$. Since $K_{w_n}/\|K_{w_n}\|\to 0$ weakly, this
disproves that $C^*_\varphi-C^*_\psi$ is compact (see, e.g.,
\cite[Theorem 1.3.4]{Zhu}).

Note, that $\|K_{w_n}\|^2=1+\log(1/(1-|{w_n}|^2))$ which together with \eqref{adjoint_kernel_property} yields
\begin{eqnarray}\label{adjoint_diff}
  \frac{\left\|(C^*_\varphi-C^*_\psi)K_{w_n}\right\|^2}{\|K_{w_n}\|^2}
  &=& \frac{\|K_{\varphi({w_n})}\|^2+\|K_{\psi({w_n})}\|^2-2\Re\langle K_{\varphi({w_n})},K_{\psi({w_n})}\rangle}{\|K_{w_n}\|^2}\nonumber\\
  &=& \frac{\ln\frac{1}{1-|\varphi({w_n})|^2} +\ln\frac{1}{1-|\psi({w_n})|^2} -2\ln\frac{1}{|1-\overline{\varphi({w_n})}\psi({w_n})|}}{1+\ln\frac{1}{1-|{w_n}|^2}}.
\end{eqnarray}

Observe, that by \eqref{analytic_upperbound}, none of the factors in the
limit in condition (i) can tend to $0$. Thus, in particular, $\varphi_0\not=\psi_0$ and
\begin{equation}\label{low_bound}
  0<\frac{|1-\overline{\varphi_0}\psi_0|}{2}<|1-\overline{\varphi(w_n)}\psi(w_n)|\leq 2,
\end{equation}
for sufficiently large $n$.

It is enough to consider three
cases:
\begin{eqnarray*}
  &\text{\bf Case I:}& |\varphi_0|=1 \qquad\text{and}\qquad |\psi_0|<1,\ \text{or}\\
  &\text{\bf Case II:}& |\varphi_0|<1 \qquad\text{and}\qquad |\psi_0|=1, \ \text{or}\\
  &\text{\bf Case III:}& |\psi_0|=|\varphi_0|=1 \qquad\text{and}\qquad \psi_0\not=\varphi_0.
\end{eqnarray*}

{\bf Case I.} Let $|\varphi_0|=1$ and $|\psi_0|<1$. Then $\Psi_0=0$ and $\Phi_0>0$, by (i).  So, for a sufficiently large $n$, say $n>N$, we have
  $$0<\frac{1-|\psi_0|^2}{2}<1-|\psi(w_n)|^2< 1,$$
and
\begin{equation}\label{phi_bound}
  0<\frac{\Phi_0}{2}<\frac{1-|w_n|^2}{1-|\varphi(w_n)|^2}<2\frac{1+|\varphi(0)|}{1-|\varphi(0)|},
\end{equation}
where the last inequality follows from \eqref{analytic_upperbound}.
Hence,
 $$\kappa(w_n):=\ln\frac{1}{1-|\psi(w_n)|^2}
  -2\ln\frac{1}{|1-\overline{\varphi(w_n)}\psi(w_n)|}+\ln\frac{1-|w_n|^2}{1-|\varphi(w_n)|^2}$$
is bounded and by \eqref{adjoint_diff}, we get
\begin{eqnarray*}
  \lim_{n\to\infty}\frac{\left\|(C^*_\varphi-C^*_\psi)K_{w_n}\right\|^2}{\|K_{w_n}\|^2}
  &=& \lim_{n\to\infty}\frac{\ln\frac{1}{1-|w_n|^2} +\kappa(w_n)}{1+\ln\frac{1}{1-|w_n|^2}}=1.
\end{eqnarray*}

{\bf Case II.} If $|\varphi_0|<1$ and $|\psi_0|=1$, then the proof of the following equality
  $$\lim_{n\to\infty}\frac{\left\|(C^*_\varphi-C^*_\psi)K_{w_n}\right\|^2}{\|K_{w_n}\|^2}=1$$
proceeds analogously to the proof in Case I.

{\bf Case III.} Let $|\varphi_0|=|\psi_0|=1$ and
$\varphi_0\not=\psi_0$. Note, that $\Phi_0$ and $\Psi_0$ can not both be equal to $0$.

If $\Phi_0=0$ and $\Psi_0\not=0$, then there exists positive integer $N$, such that for $n>N$
\begin{equation}\label{psi_bound}
  0<\frac{\Psi_0}{2}<\frac{1-|w_n|^2}{1-|\psi(w_n)|^2}<2\frac{1+|\psi(0)|}{1-|\psi(0)|}
\end{equation}
and \eqref{low_bound} hold. Moreover, since $|\varphi_0|=|\psi_0|=1$, we may assume, by passing to a subsequence if necessary, that $\varphi(w_n)\not=0$ and $w_n\not=0$ for each $n$. Now, we can use Lemma
\ref{log_lim} with $a_n=1-|\varphi(w_n)|^2$ and $b_n=1-|w_n|^2$ and get that there exists positive integer $N_1>N$ such that
  $$0<\frac{\ln(1-|\varphi(w_n)|^2)}{\ln(1-|w_n|^2)}=\frac{\ln\frac{1}{1-|\varphi(w_n)|^2}}{\ln\frac{1}{1-|w_n|^2}}<1,$$
for all $n>N_1$. By passing to a subsequence, if necessary, we can assume that the limit $\lim_{n\to\infty}\ln(1-|\varphi(w_n)|^2)/\ln(1-|w_n|^2)$ exists.
Hence,
  $$\lambda(w_n):= -2\ln\frac{1}{|1-\overline{\varphi(w_n)}\psi(w_n)|}+\ln\frac{1-|w_n|^2}{1-|\psi(w_n)|^2}$$
is bounded and, by \eqref{adjoint_diff}, we get
\begin{eqnarray*}
  \lim_{n\to\infty}\frac{\left\|(C^*_\varphi-C^*_\psi)K_{w_n}\right\|^2}{\|K_{w_n}\|^2} &=& \lim_{n\to\infty}
  \frac{
  \ln\frac{1}{1-|\varphi(w_n)|^2}+\ln\frac{1}{1-|w_n|^2} +\lambda(w_n)}
  {1+\ln\frac{1}{1-|w_n|^2}}\geq 1.
\end{eqnarray*}

If $\Phi_0\not=0$ and $\Psi_0=0$ then \eqref{low_bound} and \eqref{phi_bound} hold for n sufficiently large and
  $$\widetilde{\lambda}(w_n):= -2\ln\frac{1}{|1-\overline{\varphi(w_n)}\psi(w_n)|}+\ln\frac{1-|w_n|^2}{1-|\varphi(w_n)|^2}$$
is bounded. Another application of Lemma \ref{log_lim} ensures that
%by \eqref{adjoint_diff}
\begin{eqnarray*}
  \lim_{n\to\infty}\frac{\left\|(C^*_\varphi-C^*_\psi)K_{w_n}\right\|^2}{\|K_{w_n}\|^2} &=& \lim_{n\to\infty}
  \frac{
  \ln\frac{1}{1-|\psi(w_n)|^2}+\ln\frac{1}{1-|w_n|^2}+\widetilde{\lambda}(w_n)}
  {1+\ln\frac{1}{1-|w_n|^2}}\geq 1.
\end{eqnarray*}

Finally, if $\Phi_0\not=0$ and $\Psi_0\not=0$ then \eqref{low_bound}, \eqref{phi_bound} and  \eqref{psi_bound} hold for sufficiently large $n$. Hence,
  $$\widehat{\lambda}(w_n):= -2\ln\frac{1}{|1-\overline{\varphi(w_n)}\psi(w_n)|}+\ln\frac{1-|w_n|^2}{1-|\varphi(w_n)|^2}+\ln\frac{1-|w_n|^2}{1-|\psi(w_n)|^2}$$
is bounded and by \eqref{adjoint_diff} we get
\begin{eqnarray*}
  \lim_{n\to\infty}\frac{\left\|(C^*_\varphi-C^*_\psi)K_{w_n}\right\|^2}{\|K_{w_n}\|^2} &=& \lim_{n\to\infty}
  \frac{
  2\ln\frac{1}{1-|w_n|^2}+\widehat{\lambda}(w_n)}
  {1+\ln\frac{1}{1-|w_n|^2}}=2.
\end{eqnarray*}
This completes the proof.
\end{proof}

The above theorem is in particular true for all finitely valent self-maps of the unit disk. Moreover, in the case of disk
automorphisms of the form \eqref{automorph} we can obtain much more simple condition.

\begin{corollary}
\label{compact_diff_automor}
Let $\varphi$ and $\psi$ be disk automorphisms given by \eqref{automorph}. If $C_\varphi -C_\psi$ is compact on $\mathcal{D}$ then $\varphi=\psi$.
\end{corollary}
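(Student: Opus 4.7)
The plan is to invoke Theorem~\ref{compact_diff} and exploit the explicit Schwarz--Pick identity available for disk automorphisms. For $\varphi(z)=e^{i\theta}(a-z)/(1-\overline{a}z)$, a direct computation gives
$$1-|\varphi(w)|^2=\frac{(1-|a|^2)(1-|w|^2)}{|1-\overline{a}w|^2},$$
so
$$\frac{1-|w|^2}{1-|\varphi(w)|^2}=\frac{|1-\overline{a}w|^2}{1-|a|^2}\geq \frac{1-|a|}{1+|a|}>0$$
for every $w\in\mathbb{D}$, and the analogous bound holds for $\psi$. Consequently, the factor in curly braces in \eqref{compact_difference} is bounded below by a strictly positive constant independent of $w$, so condition \eqref{compact_difference} forces
$$\lim_{|w|\to 1^-}|\varphi(w)-\psi(w)|=0.$$

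Next I use that $\varphi$ and $\psi$, being linear-fractional maps whose only poles lie outside $\overline{\mathbb{D}}$, extend continuously to $\overline{\mathbb{D}}$. Therefore, for every $\zeta\in\mathbb{T}$ and every sequence $w_n\in\mathbb{D}$ with $w_n\to\zeta$, the previous step yields
$$\varphi(\zeta)-\psi(\zeta)=\lim_{n\to\infty}\bigl(\varphi(w_n)-\psi(w_n)\bigr)=0.$$
Hence $\varphi=\psi$ on the whole unit circle. Since $\varphi-\psi$ is holomorphic on $\mathbb{D}$ and continuous on $\overline{\mathbb{D}}$ with zero boundary values, the maximum modulus principle forces $\varphi\equiv\psi$ on $\mathbb{D}$.

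I do not foresee any serious obstacle here: all ingredients are standard. The only step that genuinely uses the automorphism hypothesis is the uniform positive lower bound on the Schwarz--Pick quotient $(1-|w|^2)/(1-|\varphi(w)|^2)$, which fails in general and is the reason the simpler statement \eqref{compact_difference} of Theorem~\ref{compact_diff} collapses to the rigid conclusion $\varphi=\psi$ in the automorphism case.
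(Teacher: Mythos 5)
Your proposal is correct and follows essentially the same route as the paper: both invoke Theorem~\ref{compact_diff} and use the identity $\frac{1-|w|^2}{1-|\varphi(w)|^2}=\frac{|1-\overline{a}w|^2}{1-|a|^2}\geq\frac{1-|a|}{1+|a|}$ to bound the bracketed factor away from zero, forcing $|\varphi-\psi|\to 0$ at the boundary and hence $\varphi=\psi$ on $\mathbb{T}$. The only cosmetic difference is that the paper argues pointwise at each fixed $\zeta\in\mathbb{T}$ and leaves the final identity-on-the-disk step implicit, whereas you state the uniform limit and finish explicitly via the maximum modulus principle.
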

\begin{proof}
Let $\varphi$ and $\psi$ be disk automorphisms given by \eqref{automorph} and assume that
$C_\varphi -C_\psi$ is compact on $\mathcal{D}$. We show that
$\varphi(\zeta)=\psi(\zeta)$ for all $\zeta\in\mathbb{T}$.

Fix $\zeta\in\mathbb{T}$. By Theorem
\ref{compact_diff} we know that the compactness of the difference
$C_\varphi -C_\psi$ implies
\begin{equation}\label{thm_week}
  \lim_{z\to
  \zeta}\left\{\frac{1-|z|^2}{1-|\varphi(z)|^2}+\frac{1-|z|^2}{1-|\psi(z)|^2}\right\}|\varphi(z)-\psi(z)|=0.
\end{equation}
We show that neither $(1-|z|^2)(1-|\varphi(z)|^2)^{-1}$ nor $(1-|z|^2)(1-|\psi(z)|^2)^{-1}$
can tend to $0$ as $z$ tends to $\zeta$. Indeed, for $\varphi$ given by \eqref{automorph}, we
have
\begin{eqnarray*}
  \frac{1-|z|^2}{1-|\varphi(z)|^2} &=&
%\frac{1-|z|^2}{1-|e^{i\theta}\frac{a-z}{1-\overline{a}z}|^2}
%=\frac{(1-|z|^2)|{1-\overline{a}z}|^2}{1+|\overline{a}z|^2-|a|^2-|z|^2}=
  \frac{|{1-\overline{a}z}|^2}{1-|a|^2}\geq\frac{1-|a|}{1+|a|}>0,
\end{eqnarray*}
for all $z\in\mathbb{\overline{D}}$. Hence, $(1-|z|^2)(1-|\varphi(z)|^2)^{-1}$ can not tend to
$0$ as $z \to \zeta$. The same argument can be used
to show that $(1-|z|^2)(1-|\psi(z)|^2)^{-1}$ does not tend to
$0$ as $z\to \zeta$. Thus, \eqref{thm_week} implies
  $$\lim_{z\to\zeta}|\varphi(z)-\psi(z)|=0,$$
and $\varphi(\zeta)=\psi(\zeta)$. Since $\zeta$ was chosen arbitrarily, our claim follows.
\end{proof}

\section{Commutator}
\setcounter{equation}{0}

In this section we study some properties of the commutator
$[C^*_{\psi},C_\varphi]$ with $\varphi$ and $\psi$ being disk automorphisms.

For $f,g\in\mathcal{D}$ one can define the following rank-one
operator
  $$f\otimes g(h):=\langle h,g\rangle_{\mathcal{D}}f, \quad h\in\mathcal{D}.$$
By \eqref{Martin_Vukotic_adjoint}, for an arbitrary
linear-fractional self-map $\psi$, the adjoint of the composition
operator $C_{\psi}$ can be written as
\begin{equation}\label{adjoint_compact}
  C^*_{\psi}=sC_{\psi^*}+K,
\end{equation}
where $Kf:=(K_{\psi(0)}\otimes K_0)(f)-s (K_0\otimes
K_0)(C_{\psi^*}f)$, $K_w$ is a kernel function given by
\eqref{kernel} and $s$ is defined in \eqref{Martin_Vukotic_adjoint}.
Obviously, K is a compact operator on $\mathcal{D}$. Hence, we have
\begin{eqnarray}\label{commutator_diff_compact}
  [C^*_{\psi},C_\varphi]f&=&C^*_{\psi}C_\varphi f-C_\varphi C^*_{\psi}f
  %=s(C_{\psi^*}C_\varphi -C_\varphi C_{\psi^*})f+Lf\nonumber\\
  =s(C_{\psi^*\circ \varphi} -C_{\varphi\circ \psi^*})f+Lf,
\end{eqnarray}
where
\begin{equation}\label{commut_diff}
L:=KC_\varphi-C_\varphi K=[K,C_\varphi]
\end{equation}
is again compact and $L\not=0$, unless both $\varphi$ and $\psi$ are equal to the identity.

\begin{theorem}
\label{commut_compact_automor} Let $\varphi$, $\psi$ be
disk automorphisms given by \eqref{automorph}, none of which is the
identity. Then the commutator $[C^*_{\psi},C_\varphi]$ is
non-trivially compact if and only if either both $\varphi$ and $\psi^*$
have the same set of fixed points, or both $\varphi$ and $\psi$
are elliptic.
\end{theorem}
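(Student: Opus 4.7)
The plan is to combine the decomposition in \eqref{commutator_diff_compact} with Corollary~\ref{compact_diff_automor} to reduce compactness of the commutator to an algebraic identity for $\varphi$ and $\psi^*$ in $\mathrm{Aut}(\mathbb{D})$, and then to translate that identity into conditions on fixed points and automorphism types.

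First, using \eqref{commutator_diff_compact} I would write $[C^*_{\psi},C_\varphi] = s(C_{\psi^*\circ\varphi} - C_{\varphi\circ\psi^*}) + L$, where $L$ is compact and $s = ad-bc \neq 0$ since $\psi$ is a nontrivial disk automorphism. Hence the commutator is compact if and only if the difference $C_{\psi^*\circ\varphi} - C_{\varphi\circ\psi^*}$ is compact. Because $\psi^*$ is itself a disk automorphism whenever $\psi$ is, both $\psi^*\circ\varphi$ and $\varphi\circ\psi^*$ are disk automorphisms of the form \eqref{automorph}, so Corollary~\ref{compact_diff_automor} upgrades compactness of this difference to the equality $\psi^*\circ\varphi = \varphi\circ\psi^*$. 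The problem is thereby reduced to deciding when $\varphi$ and $\psi^*$ commute in $\mathrm{Aut}(\mathbb{D})$.

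For the translation step I would invoke the classical fact that two non-identity Möbius transformations commute if and only if they share their fixed-point set, and verify by a direct cross-ratio computation that the ambient $PSL_2(\mathbb{C})$ exceptional case of commuting elliptic involutions cannot occur inside $\mathrm{Aut}(\mathbb{D})$. Together with the observation already recorded in the introduction that $w$ is a fixed point of $\psi$ if and only if $1/\overline{w}$ is a fixed point of $\psi^*$, this shows that $\psi$ and $\psi^*$ always share fixed points in $\mathbb{C}\cup\{\infty\}$, so $\varphi$ and $\psi^*$ commute precisely when $\varphi$ and $\psi$ share their fixed-point set. Splitting by the location of the shared fixed points yields the stated dichotomy: if they lie on $\mathbb{T}$, then $\varphi$ and $\psi$ are parabolic or hyperbolic, which is the first clause; if the shared fixed points are of the form $\{p,1/\overline{p}\}$ with $p\in\mathbb{D}$, then both $\varphi$ and $\psi$ are elliptic, which is the second clause.

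For the converse direction, in either case the equality $\psi^*\circ\varphi = \varphi\circ\psi^*$ holds, so $[C^*_{\psi},C_\varphi] = L = [K,C_\varphi]$ is automatically compact. Non-triviality, i.e. $L\neq 0$, comes from a short computation with the explicit form $Kf = f(0)K_{\psi(0)} - s f(\psi^*(0))K_0$ given just after \eqref{adjoint_compact}, evaluated on kernel functions, using that neither $\varphi$ nor $\psi$ is the identity. The main obstacle will be the Möbius-theoretic case analysis in the second paragraph: one has to rule out the would-be commuting-involution exception inside $\mathrm{Aut}(\mathbb{D})$ and then re-express the shared-fixed-point condition precisely in the disjunctive form of the statement.
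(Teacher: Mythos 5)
Your operator-theoretic reduction is exactly the paper's: both arguments use \eqref{commutator_diff_compact}, compactness of $L$ and $s\neq 0$ to show that $[C^*_{\psi},C_\varphi]$ is compact if and only if $C_{\psi^*\circ\varphi}-C_{\varphi\circ\psi^*}$ is, then apply Corollary~\ref{compact_diff_automor} to convert this to the identity $\psi^*\circ\varphi=\varphi\circ\psi^*$, and both handle non-triviality by noting $L\neq 0$. The divergence is in the M\"obius-theoretic endgame. The paper cites Lehner's Theorem~2 (p.~72) in the form ``$\varphi$ and $\psi^*$ commute if and only if they have the same fixed-point set or both $\varphi$ and $\psi$ are elliptic,'' which hands over the stated disjunction in one line in both directions. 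You instead invoke the classical commuting criterion in $PSL_2(\mathbb{C})$ and rule out the exceptional case of commuting involutions inside $\mathrm{Aut}(\mathbb{D})$ (correctly: an automorphism of $\mathbb{D}$ cannot send a point of $\mathbb{D}$ to a point outside $\overline{\mathbb{D}}$), arriving at ``commute if and only if they share their fixed-point set.''

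That substitution creates a genuine gap. Your criterion proves the ``only if'' half of the theorem (shared fixed points in $\mathbb{D}$ do force both maps to be elliptic), but it does not prove the ``if'' half for the second clause: that clause asserts compactness whenever both $\varphi$ and $\psi$ are elliptic, with no requirement that they share fixed points, and your last sentence only establishes the implication in the opposite direction. Worse, on your own analysis a pair such as $\varphi(z)=-z$ and the elliptic involution $\psi(z)=(1/2-z)/(1-z/2)$ (which fixes $2-\sqrt{3}$ and satisfies $\psi^*=\psi$) does not commute, so by Corollary~\ref{compact_diff_automor} the difference, hence the commutator, would be non-compact even though both symbols are elliptic --- i.e.\ your argument contradicts the sufficiency of ``both elliptic'' rather than proving it. You must either quote Lehner's theorem in exactly the form the paper relies on, or explicitly reconcile your characterization with the statement (for instance by observing that when elliptic $\varphi$ and $\psi$ share the fixed point $p\in\mathbb{D}$, the set $\{p,1/\overline{p}\}$ is invariant under $w\mapsto 1/\overline{w}$, so this case is already contained in the first clause); as written, the second clause of the equivalence is left unproved.
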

\begin{proof}
Assume first, that either both $\varphi$ and $\psi^*$ have the same
set of fixed points, or both $\varphi$ and $\psi$ are elliptic.
Then by \cite[Theorem 2, p. 72]{L} we know that $\varphi$ and
$\psi^*$ commute, that is $\psi^*\circ \varphi=\varphi\circ \psi^*$. Thus, the difference $C_{\psi^*\circ \varphi}
-C_{\varphi\circ \psi^*}$ in \eqref{commutator_diff_compact} is
equal to zero and the commutator $[C^*_{\psi},C_\varphi]$ is
non-trivially compact, since $\varphi$ and $\psi$ are not equal to the identity and $L\not=0$ (see formula \eqref{commut_diff}).

Now, assume that the commutator $[C^*_{\psi},C_\varphi]$ is
non-trivially compact. Then, by formula
\eqref{commutator_diff_compact} $C_{\psi^*\circ \varphi}
-C_{\varphi\circ \psi^*}$ is also compact and Corollary
\ref{compact_diff_automor} implies that $\psi^*\circ \varphi =\varphi\circ
\psi^*$. Finally, again by \cite[Theorem 2, p. 72]{L} we obtain that
either both $\varphi$ and $\psi^*$ have the same set of fixed points,
or both $\varphi$ and $\psi$ are elliptic.
\end{proof}
We say that the composition
operator $C_\varphi$ is essentially normal if the self-commutator $[C^*_{\varphi},C_\varphi]$ is compact.
This property was studied in \cite{BLNS,Z} for composition operator defined on the Hardy space and in \cite{MW} for composition operators defined on the weighted Bergman spaces.
As a consequence of Theorem \ref{commut_compact_automor} we get the following sufficient condition for $C_\varphi$ to be essentially normal on the Dirichlet space $\mathcal{D}$.

\begin{corollary}
If $\varphi$ is a disk automorphism given by \eqref{automorph}, then the composition operator $C_\varphi$ is
essentially normal.
\end{corollary}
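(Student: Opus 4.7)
The plan is a direct application of Theorem \ref{commut_compact_automor} in the special case $\psi=\varphi$, together with a trivial observation for $\varphi=\mathrm{id}$. If $\varphi$ is the identity, then $[C^*_\varphi,C_\varphi]=0$, which is compact. Otherwise it suffices to verify the hypothesis of Theorem \ref{commut_compact_automor} with $\psi=\varphi$: namely, that either $\varphi$ and $\varphi^*$ share the same set of fixed points, or both $\varphi$ and $\psi=\varphi$ are elliptic.

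If $\varphi$ is elliptic, the second alternative holds trivially. If instead $\varphi$ is parabolic or hyperbolic, I claim $\varphi$ and $\varphi^*$ have the same set of fixed points. For such automorphisms, all fixed points in $\widehat{\mathbb{C}}$ lie on $\mathbb{T}$ (one for parabolic, two for hyperbolic). By the remark recorded in the introduction, every such boundary fixed point $\zeta$ of $\varphi$, satisfying $1/\overline{\zeta}=\zeta$, is also a fixed point of $\varphi^*$. Conversely, the bijection $w \mapsto 1/\overline{w}$ between the fixed-point sets of $\varphi$ and $\varphi^*$ forces the two sets to have the same cardinality; since $\varphi^*$ is not the identity (a non-identity $\varphi$ cannot have $\varphi^*=\mathrm{id}$, else the bijection would give $\varphi$ infinitely many fixed points), this common cardinality is at most two, so the above inclusion is actually an equality. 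Theorem \ref{commut_compact_automor} then yields compactness of $[C^*_\varphi,C_\varphi]$.

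There is no serious obstacle here: the corollary follows very quickly from Theorem \ref{commut_compact_automor} and the two facts about fixed points of disk automorphisms already recorded in the introduction. The only point one might wish to verify for completeness is that non-elliptic disk automorphisms of the form \eqref{automorph} really do have all their fixed points on $\mathbb{T}$, which is a standard fact and can be checked directly from \eqref{automorph} via the quadratic fixed-point equation.
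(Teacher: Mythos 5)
Your proposal is correct and follows essentially the same route as the paper: handle the identity trivially, use the elliptic alternative of Theorem \ref{commut_compact_automor} when $\varphi$ is elliptic, and otherwise observe that a non-elliptic automorphism has all its fixed points on $\mathbb{T}$, so the introduction's remark gives $\mathrm{Fix}(\varphi)=\mathrm{Fix}(\varphi^*)$. The only difference is cosmetic: the paper verifies the boundary location of the fixed points by explicit computation, whereas you cite it as standard and add a short cardinality argument (which the set-level bijection $w\mapsto 1/\overline{w}$ already makes unnecessary).
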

\begin{proof}
Let $\varphi$ be given by \eqref{automorph}. If $\varphi$ is equal
to the identity then $[C^*_{\varphi},C_\varphi]=0$. Now assume that
$\varphi$ is not the identity map. If $\varphi$ is elliptic, then by
Theorem \ref{commut_compact_automor} the commutator
$[C^*_{\varphi},C_\varphi]$ is non-trivially compact. We show that
if $\varphi$ is not elliptic, then $\varphi$ and $\varphi^*$ have
the same set of fixed points. This follows from our observation that
$\varphi$ and $\varphi^*$ have the same set of fixed points on
$\mathbb{T}$ (see, Section 1. Introduction). Indeed, if $\varphi$ is
a parabolic automorphism, then it has only one fixed point
$z=(1+e^{i\theta})/(2\overline{a})\in \mathbb{T}$, and if $\varphi$
is a hyperbolic automorphism, then it has two fixed points
$z_{k}=(1+e^{i\theta/2})\overline{a}^{-1}(\cos\theta/2 +(-1)^k
i\sqrt{|a|^2-\cos^2\theta/2})\in \mathbb{T}$, $k=1,2$. Again, by
Theorem \ref{commut_compact_automor}, the commutator
$[C^*_{\varphi},C_\varphi]$ is non-trivially compact, which
completes the proof.
\end{proof}

\end{document}